\documentclass[12pt]{article}

\usepackage{amsmath}
\allowdisplaybreaks[4]
\usepackage{tikz}
\usetikzlibrary{decorations.pathreplacing, matrix}
\usetikzlibrary{decorations.pathreplacing}
\usepackage[all]{xy}
\usepackage{amssymb}
\usepackage{amsthm}
\usepackage{hyperref}
\usepackage{bm}
\hypersetup{colorlinks=true,linkcolor=blue,citecolor=red}
\usepackage{amsmath}
\usepackage{amscd}
\usepackage{verbatim}
\usepackage{eurosym}
\usepackage{float}
\usepackage{color}
\usepackage{dcolumn}
\usepackage[mathscr]{eucal}
\usepackage[all]{xy}
\usepackage{hyperref}
\usepackage{mathrsfs}
\usepackage{amsmath}
\usepackage{amssymb}
\usepackage{amsfonts,ifpdf}
\usepackage{graphicx}
\usepackage{times}
\usepackage{float}
\usepackage{epstopdf}
\usepackage{cite}
\usepackage{youngtab}
\usepackage{ytableau}
\ytableausetup
{mathmode, boxsize=0.9em}
\usepackage{indentfirst}

\setlength{\evensidemargin}{0.3cm}
\setlength{\oddsidemargin}{1.5cm}
\parskip=6pt
\frenchspacing
\textwidth=15cm
\textheight=23cm
\parindent=16pt
\oddsidemargin=0.5cm
\evensidemargin=0.5cm
\topmargin=-1.2cm

\newtheorem{theorem}{Theorem}[section]
\newtheorem{lemma}[theorem]{Lemma}
\newtheorem{prop}[theorem]{Proposition}
\newtheorem{coro}[theorem]{Corollary}

\theoremstyle{definition}
\newtheorem{defi}[theorem]{Definition}

\theoremstyle{remark}
\newtheorem{exam}[theorem]{Example}
\newtheorem{remark}[theorem]{Remark}

\makeatletter \@addtoreset{equation}{section} \makeatother
\makeindex \setcounter{tocdepth}{2}

\def\rank{\mbox{\rm rank}}

\def\rk{\operatorname{rank}}

\def\A{\mathcal{A}}

\def\B{\mathcal{B}}
\renewcommand\ell{l}

\def\det{\mbox{\rm det}}
\def\And{\mbox{\rm ~and~}}

\def\sst{\scriptscriptstyle}

\begin{document}

\begin{center}
{\Large\bf
$k$-Adjoint of Hyperplane Arrangements
}\\ [7pt]
\end{center}

\vskip 3mm

\begin{center}
 Weikang Liang$^{1}$, Suijie Wang$^{2}$ and Chengdong Zhao$^{3}$\\[8pt]
 $^{1,2}$School of Mathematics\\
 Hunan University\\
 Changsha 410082, Hunan, P. R. China\\[12pt]

 $^{3}$
 School of Mathematics and Statistics\\
 Central South University\\
 Changsha 410083, Hunan, P. R. China\\[15pt]

 Emails: $^{1}$kangkang@hnu.edu.cn, $^{2}$wangsuijie@hnu.edu.cn,  $^{3}$cdzhao@csu.edu.cn\\[15pt]

\end{center}

\vskip 3mm

\begin{abstract}

In this paper, we introduce the $k$-adjoint of a given hyperplane arrangement $\mathcal{A}$ associated with rank-$k$ elements in the intersection lattice $L(\mathcal{A})$, which generalizes
the classical adjoint proposed by Bixby and Coullard.
The $k$-adjoint of $\mathcal{A}$ induces a decomposition of the Grassmannian,
which we call the $\mathcal{A}$-adjoint decomposition.
Inspired by the work of Gelfand, Goresky, MacPherson, and Serganova,
we generalize the matroid decomposition and refined Schubert decomposition of the Grassmannian from the perspective of $\mathcal{A}$.
Furthermore, we prove that these three decompositions are exactly the same decomposition.
A notable application involves providing a combinatorial classification of all the $k$-dimensional restrictions of $\mathcal{A}$.
Consequently, we establish the  anti-monotonicity property of some combinatorial invariants, such as Whitney numbers of the first kind and the independece numbers.

\vskip 6pt

\noindent
{\bf Mathematics Subject Classification: } 05B35, 52C35
\\ [7pt]
{\bf Keywords:}
Hyperplane arrangement, adjoint arrangement, Grassmannian,
Pl\"{u}cker coordinates, Whitney number, independece number
\end{abstract}

\section{Introduction}
In this paper, we introduce an approach for constructing new hyperplane arrangements using the intersection lattice of a given hyperplane arrangement, which is referred to as the $k$-adjoint arrangement.
We reveal the connections and applications of $k$-adjoint arrangements to the following objects:
\begin{itemize}
\item Extending the concept of the adjoint of a hyperplane arrangement, as proposed by Bixby and Coullard \cite{Bixby-Coullard1988};

\item Providing a new decomposition of the Grassmannian, while also extending two classical decompositions attributed to Gelfand, Goresky, MacPherson, and Serganova \cite{Gelfand-Goresky-MacPherson-Serganova1987}. Moreover, we demonstrate that the three decompositions are the same;

\item Classifying all $k$-dimensional restrictions of a given hyperplane arrangement;

\item Establishing the anti-monotonicity property of certain combinatorial invariants of a hyperplane arrangement when restricted to different subspaces \cite{Chen-Fu-Wang-2021}.
\end{itemize}

Of utmost initial motivation to us is a fundamental question: how to classify the combinatorial structure of a given hyperplane arrangement restricted to different subspaces.
To rigorously formulate our question, we recall the basic notation in hyperplane arrangements.
A {\it hyperplane arrangement} $\mathcal{A}$ is a finite collection of hyperplanes in the vector space $\mathbb{F}^n$ over a field $\mathbb{F}$.
Throughout this paper,
we assume that $\mathcal{A}$ is {\it linear} and  {\it essential},
i.e.,
$\bigcap_{H \in \mathcal{A}} H=\{0\}$.
Let $L(\mathcal{A})$ be the {\it intersection lattice} of $\mathcal{A}$
consisting of subspaces $X=\bigcap_{H \in \mathcal{B}} H$ for all
$\mathcal{B} \subseteq\mathcal{A}$,
equipped with a partial order induced from the reverse inclusion,
where the minimal element is set to be $\mathbb{F}^n=\bigcap_{H \in  \varnothing} H$.
It is known that $L(\A)$ is graded with the rank function $\rk(X)=n-\dim(X)$.
Let
\[L_{k}(\mathcal{A}):=\left\{X \in L(\mathcal{A})  \mid \rk(X)=k\right\}.\]
Given a subspace $U$ of $\mathbb{F}^n$,
the {\it restriction} of $\mathcal{A}$ to $U$ is the arrangement in $U$ defined by
\[\mathcal{A}|_{U}=\{H \cap U \mid  H \in \mathcal{A}\And U\not\subseteq H\}.\]
If $\dim(U) = k$,
$\mathcal{A}|_{U}$ is called a $k$-dimensional restriction of $\A$.
The initial question can be rephrased as classifying all $k$-dimensional restrictions of $\A$,
or in other words,
characterizing $L(\mathcal{A}|_{U})$ for all $k$-dimensional subspaces $U$.

We address the question by introducing a new hyperplane arrangement associated with $L_k(\mathcal{A})$,
called the $k$-adjoint of $\A$.
We primarily note that our question is equivalent to finding a geometric decomposition of the Grassmannian $\mathrm{Gr}(k, n, \mathbb{F})$ ($\mathrm{Gr}(k, n)$ for simplicity),
which stands for the set of all $k$-dimensional subspaces of $ \mathbb{F}^n$.
Let $[n]=\{1,2,\ldots,n\}$ and let
$\mathbb{F}^{[n]\choose k}$ be the ${n\choose k}$-dimensional vector space over $\mathbb{F}$ with columns indexed by $k$-subsets of $[n]$.
Then $U\in \mathrm{Gr}(k, n)$ can be specified as the row space of a $k\times n$ matrix $A$, known as a matrix representative of $U$.
Then up to a nonzero scalar, the vector $(\Delta_{I}(U))_{I\in{\binom{[n]}{k}}}\in \mathbb{F}^{[n]\choose k}$ is independent of the choice of matrix representatives $A
$ of $U$, denoted by $\Delta(U)$, where $\Delta_{I}(U)$ denotes the $k\times k $ minor of $A$ with the column index set $I$.
In 1988,
Bixby and Coullard \cite{Bixby-Coullard1988} noticed that $L_{n-1}(\mathcal{A})$ yields an adjoint of the hyperplane arrangement $\A$.
As an extension, our definition of $k$-adjoint of $\A$ is as follows.

\begin{defi}\label{k-adjoint-def}
For each $X\in L_{k}(\mathcal{A})$ and $I\in {[n]\choose k}$$(0 \leq k < n)$,
let
\[a_{I}(X)=(-1)^{\frac{1}{2}k(k+1) + \sum_{i\in I} i}\Delta_{[n]\setminus I}(X).\]
Then the \emph{adjoint} $H(X)$ of $X$ is a hyperplane in $\mathbb{F}^{{[n]}\choose{k}}$ defined by
\begin{align*}
H(X)=\Big\{({x}_I)_{\sst I\in{\binom{[n]}{k}}}\;\big|\;\sum_{ I\in{\binom{[n]}{k}}}
a_I(X)\cdot {x}_I=0\Big\}.
\end{align*}
The hyperplane arrangement
\[
\mathcal{A}^{(k)}=\{H(X) \mid X\in L_{k}(\mathcal{A})\}
\]
in $\mathbb{F}^{{[n]}\choose{k}}$ is called \emph{$k$-adjoint} of $\mathcal{A}$.
\end{defi}

The concept of $k$-adjoint plays a central role in constructing a new geometric decomposition of the Grassmannian,
as demonstrated below.
Given any hyperplane arrangement $\mathcal{A}$ in $\mathbb{F}^n$, for each $X \in L(\mathcal{A})$, the {\it relative interior} of $X$ is defined as  
\begin{align}\label{relint-def}
    X^{\circ} = X \setminus \bigcup_{Y \in L(\mathcal{A}), Y > X} Y.
\end{align}  
With this notation, for each $ P \in L(\mathcal{A}^{(k)}) $, we define  
\[
\mathcal{S}_P = \left\{ U \in \mathrm{Gr}(k, n) \mid \Delta(U) \in P^\circ \right\}.
\]
As will be seen in Section \ref{Sec-3},
the Grassmannian can be decomposed as
\[\mathrm{Gr}(k, n)=\bigsqcup_{P\in L(\mathcal{A}^{(k)})}\mathcal{S}_P,\]
called \emph{$\A$-adjoint decomposition} of $\mathrm{Gr}(k, n)$,
and each $\mathcal{S}_P$ is called an \emph{$\A$-adjoint stratum}.

Gelfand, Goresky, MacPherson and Serganova \cite{Gelfand-Goresky-MacPherson-Serganova1987}
provided three equivalent methods of decomposing the Grassmannian.
Inspired by their work, we propose two additional decompositions of the Grassmannian associated with a given hyperplane arrangement $\A$,
the  {\it $\A$-matroid decomposition} and the
{\it refined $\A$-Schubert decomposition},
which are actually the same as the $\A$-adjoint decomposition.
Roughly speaking, the \(\mathcal{A}\)-matroid decomposition decomposes
\(\mathrm{Gr}(k, n)\) into strata such that the \(k\)-dimensional restriction of \(\mathcal{A}\) to every subspace within a given stratum has the same matroid structure.
The refined $\A$-Schubert decomposition is the common refinement of all the decompositions into Schubert cells induced from the maximal chains of $L(\A)$.
When $\A$ is the Boolean arrangement,
they both reduce to the two decompositions in \cite{Gelfand-Goresky-MacPherson-Serganova1987}.
Detailed definitions for these decompositions can be found in Section \ref{Sec-3}, and the fact that they are the same is stated in the following theorem.

\begin{theorem}\label{main}
For a given hyperplane arrangement $\A$ in $\mathbb{R}^n$, the $\A$-matroid decomposition, the $\A$-adjoint decomposition and the refined $\A$-Schubert decomposition are exactly the same decomposition of the Grassmannian.
\end{theorem}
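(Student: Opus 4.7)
The plan is to prove the two equivalences $\A$-matroid $\Leftrightarrow$ $\A$-adjoint and $\A$-adjoint $\Leftrightarrow$ refined $\A$-Schubert separately, both routed through a single geometric observation I will call the \emph{adjoint-incidence bridge}: for every $X \in L_k(\A)$ and $U \in Gr(k,n)$,
\[
U \in \sigma X \iff \dim(X \cap U) \geq 1.
\]
To verify the bridge I would write matrix representatives of $U$ (a $k \times n$ matrix) and $X$ (an $(n-k) \times n$ matrix) and expand the determinant of the stacked $n \times n$ matrix by Laplace along the top $k$ rows; thanks to the sign convention in Definition \ref{k-adjoint-def}, the result is exactly the defining linear form $\sum_I a_I \Delta_I(U)$ of $\sigma X$. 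Since $\dim U + \dim X = n$, the stacked determinant vanishes iff $U + X \subsetneq \mathbb{F}^n$, which is equivalent to $\dim(U \cap X) \geq 1$.

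For $\A$-adjoint $\Leftrightarrow$ $\A$-matroid: by the bridge, the adjoint stratum $\mathcal{S}_T$ containing $U$ is governed by the incidence set $\mathcal{I}(U) := \{X \in L_k(\A) : \dim(X \cap U) \geq 1\}$, since the ambient flat $T$ is forced to equal the intersection of all $\sigma X$ containing $\Delta(U)$. The matroid of $\A|_U$, on the other hand, is determined by the dimensions $\dim(X \cap U)$ for \emph{all} $X \in L(\A)$ through
\[
\rk_{\A|_U}(\mathcal{B}) = k - \dim\Bigl(U \cap \bigcap_{H \in \mathcal{B}} H\Bigr), \qquad \mathcal{B} \subseteq \A.
\]
The easy direction (matroid determines adjoint stratum) follows since the matroid encodes $\mathcal{I}(U)$ directly. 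For the reverse direction, I plan to recover every $\dim(X \cap U)$ from $\mathcal{I}(U)$ by induction on $\rk_{L(\A)}(X)$, using the identity $\dim(X \cap U) = \dim X + \dim U - \dim(X + U)$ together with the fact that any $X \in L(\A)$ of rank $> k$ is an intersection of rank-$k$ flats, so that $\mathcal{I}(U)$ pins down $U + X$ and hence $U \cap X$.

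For $\A$-adjoint $\Leftrightarrow$ refined $\A$-Schubert: the refined $\A$-Schubert cell of $U$ is defined by the vanishing pattern of the coordinates obtained by expanding $U$ with respect to a basis of normal vectors of $\A$ in place of the standard basis, so each such coordinate equals, up to a unit on the cell, a maximal minor of a matrix representative of $U$ in that basis. Applying the adjoint-incidence bridge a second time, each vanishing condition translates into $U \in \sigma X$ for a specific rank-$k$ flat $X \in L_k(\A)$ whose defining hyperplanes in $\A$ are indexed by the ``omitted'' basis vectors. Hence the vanishing pattern of $\A$-Plücker coordinates of $U$ is precisely $\mathcal{I}(U)$, and the refined Schubert cell and adjoint stratum of $U$ coincide.

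The main obstacle will be the reconstruction step in the first equivalence: showing that the binary information in $\mathcal{I}(U)$ suffices to recover the full dimension vector $(\dim(X \cap U))_{X \in L(\A)}$. The subtlety is that $\mathcal{I}(U)$ records only whether a given rank-$k$ flat meets $U$ nontrivially; the extra quantitative content needed for higher-order intersections is in fact stored implicitly in the ambient flat $T \in L(\A^{(k)})$, which captures higher-order coincidences among the $\sigma X$'s. Making this precise requires a careful inductive rank-count in $L(\A)$, after which one concludes that the map $U \mapsto (\dim(X \cap U))_{X \in L(\A)}$ factors through the adjoint stratification and all three decompositions share the same indexing data.
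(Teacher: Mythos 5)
Your ``adjoint-incidence bridge'' is exactly Lemma~\ref{lem-2.1} of the paper, and the formula $\rk_{\A|_U}(\mathcal{B}) = k - \dim\bigl(U \cap \bigcap_{H\in\mathcal{B}} H\bigr)$ you write down is the content of Lemma~\ref{lem-2}; up to that point you are on the paper's track. The trouble starts with your plan for the first equivalence.

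You treat ``recover $\dim(X\cap U)$ for all $X\in L(\A)$ from the incidence set $\mathcal{I}(U)\subseteq L_k(\A)$'' as the hard step and propose an induction on rank in $L(\A)$ built on the claim that, because any flat of rank $>k$ is an intersection of rank-$k$ flats, ``$\mathcal{I}(U)$ pins down $U+X$.'' That claim is not justified: for $X'\in \mathcal{I}(U)$ you only learn $U+X'\subsetneq \mathbb{R}^n$, not the subspace $U+X'$ itself, and $U+\bigcap_i X_i'$ is in general a proper subspace of $\bigcap_i(U+X_i')$, so the intersection structure of $L(\A)$ does not transport through $U+(-)$. More importantly, the entire detour is unnecessary. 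By your own rank formula applied to $k$-subsets $J\subseteq[m]$, a $k$-set $J$ is a basis of $M(\A|_U)$ iff $\dim\bigl(U\cap\bigcap_{j\in J}H_j\bigr)=0$, i.e.\ iff $\bigcap_{j\in J}H_j\notin\mathcal{I}(U)$ (this is Corollary~\ref{coro-2}). Since a matroid is uniquely determined by its collection of bases, $\mathcal{I}(U)$ already determines the whole rank function --- no induction on $L(\A)$ is needed, and the worry you flag as ``the main obstacle'' simply does not arise. Your remark that the flat $T$ stores ``extra quantitative content'' beyond $\mathcal{I}(U)$ is also off: $T=\bigcap_{X\in\mathcal{I}(U)}\sigma X$ and $\mathcal{I}(U)=\{X : T\subseteq\sigma X\}$, so $T$ and $\mathcal{I}(U)$ are interchangeable data.

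For the second equivalence, your description of the refined $\A$-Schubert cell as a ``vanishing pattern of coordinates obtained by expanding $U$ with respect to a basis of normal vectors of $\A$'' does not match Definition~\ref{A-Sucbert} and is not even meaningful in general: the normals $\alpha_1,\dots,\alpha_m$ typically fail to be a basis of $\mathbb{R}^n$ (usually $m>n$ with dependencies), so there is no such coordinate expansion. The permuted $\A$-Schubert cells are cut out by the dimension conditions $\dim(U\cap F^\sigma_i)$, and the bridge does not apply to the $F^\sigma_i$ directly since those flags pass through flats of every rank, not just rank $k$. The route the paper takes --- and the one that works --- is to match the Schubert data against the matroid: $\dim(U\cap F^\sigma_i)=k-\rk_{M(\A|_U)}\{\sigma(1),\dots,\sigma(i)\}$, so the map $f$ with $U\in C_f$ determines and is determined by $M(\A|_U)$, giving $\A$-matroid $\Leftrightarrow$ refined $\A$-Schubert and then the full three-way equivalence by transitivity.
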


At this point Theorem \ref{main} answers the question raised at the beginning of the paper,
namely the intersection lattice of the $k$-dimensional restriction $\mathcal{A}|_{U}$ remains isomorphic to the flat lattice of a fixed matroid when $U$ varies over a fixed $\A$-adjoint stratum.
When $P$ runs over the lattice $L(\mathcal{A}^{(k)})$,
we are naturally led to explore the relationships on combinatorial invariants of $\A|_U$ for $U\in \mathcal{S}_P$.
Let $\mathfrak{M}(\mathcal{A})$ be the matroid associated with the arrangement $\A$. 
We denote by $I_i(\A)$ the independence number of size $i$ in the matroid 
$\mathfrak{M}(\mathcal{A})$, and by $w_i(\A)$ the $i$-th Whitney number of the first kind of $\mathfrak{M}(\mathcal{A})$. 
Then, 
we find that signless Whitney numbers $|{w}_i(\mathcal{A}|_{U})|$ of the first kind and the independence numbers $I_i(\mathcal{A}|_{U})$ exhibit the anti-monotonicity property, as stated below.

\begin{theorem}\label{wf}
Let $\mathcal{A}$ be a hyperplane arrangement in $\mathbb{R}^n$
and $\mathcal{A}^{(k)}$ be the $k$-adjoint of $\mathcal{A}$.
Assume that $P_1, P_2\in L(\mathcal{A}^{(k)})$ with $P_1\leq P_2$.
If $U_j\in \mathcal{S}_{P_j}$ for $j=1,2$,
then we have
\begin{enumerate}
\item[{\rm (1)}] $I_i(\mathcal{A}|_{U_1})\geq I_i(\mathcal{A}|_{U_2});$
\item[{\rm (2)}] $|{w}_i(\mathcal{A}|_{U_1})|\geq |w_i(\mathcal{A}|_{U_2})|$.
\end{enumerate}
\end{theorem}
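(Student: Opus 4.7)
The plan is to translate the partial order $T_1 \leq T_2$ on $L(\mathcal{A}^{(k)})$ into a weak-map specialization between the restriction matroids, and then invoke two classical monotonicity results. By Theorem \ref{main} the $\mathcal{A}$-adjoint and $\mathcal{A}$-matroid decompositions agree, so the matroid $M(\mathcal{A}|_U)$ is constant on each stratum $\mathcal{S}_T$; call it $M_T$. Essentiality of $\mathcal{A}$ forces $\bigcap_{H \in \mathcal{A}} H = \{0\}$, from which a short computation yields that every $M_T$ has full rank $k$. After adjoining the hyperplanes $H$ with $U \subseteq H$ as loops, one may regard $M_{T_1}$ and $M_{T_2}$ as rank-$k$ matroids on the common ground set $\mathcal{A}$ without affecting $I_i$ or $|w_i|$.

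The heart of the proof is the following key lemma: if $T_1 \leq T_2$ and $U_j \in \mathcal{S}_{T_j}$, then
\[
\dim(U_1 \cap X) \leq \dim(U_2 \cap X) \qquad \text{for every } X \in L(\mathcal{A}).
\]
Since $\rk_{M(\mathcal{A}|_U)}(\mathcal{B}) = k - \dim(U \cap X_{\mathcal{B}})$ with $X_{\mathcal{B}} = \bigcap_{H \in \mathcal{B}} H$, this inequality is exactly the rank-function dominance $\rk_{M_{T_1}}(\mathcal{B}) \geq \rk_{M_{T_2}}(\mathcal{B})$ for every $\mathcal{B} \subseteq \mathcal{A}$, i.e., the assertion that $M_{T_2}$ is a rank-preserving weak-map image of $M_{T_1}$. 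I would prove this by specialization. The relation $T_1 \leq T_2$ means $T_2 \subseteq T_1$ as subspaces of $\mathbb{R}^{\binom{[n]}{k}}$; since $\operatorname{relint}(T_1)$ is open and dense in $T_1$, every Pl\"ucker point $\Delta(U_2) \in \operatorname{relint}(T_2) \subseteq T_1$ should be approachable by Pl\"ucker points of a family $U^{(t)} \in \mathcal{S}_{T_1}$. Upper semi-continuity of the Schubert condition $\{U \colon \dim(U \cap X) \geq j\}$ on $Gr(k,n)$, combined with the stratum-constancy of $\dim(U \cap X)$ on $\mathcal{S}_{T_1}$, then yields the inequality by passing to the limit.

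Given the weak-map relation, both parts follow from standard matroid theory. For (1), weak-map images preserve dependence: every independent subset of $M_{T_2}$ remains independent in $M_{T_1}$, whence $I_i(\mathcal{A}|_{U_1}) \geq I_i(\mathcal{A}|_{U_2})$. For (2), the classical theorem of Lucas on weak maps of equal-rank matroids asserts that the signless Whitney numbers of the first kind are non-increasing under rank-preserving weak-map images, which delivers $|w_i(\mathcal{A}|_{U_1})| \geq |w_i(\mathcal{A}|_{U_2})|$.

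The main obstacle is the density portion of the key lemma: one must verify that $\operatorname{relint}(T_1) \cap \Delta(Gr(k,n))$ actually accumulates at every point of $\operatorname{relint}(T_2) \cap \Delta(Gr(k,n))$, not merely that $\operatorname{relint}(T_2) \subseteq T_1$ as abstract subspaces. This is a transversality statement about how the Pl\"ucker variety cuts through $\mathcal{A}^{(k)}$, and it appears to rest on the non-emptiness of every $\mathcal{A}$-adjoint stratum guaranteed by Theorem \ref{main} together with irreducibility and dimension counts for Pl\"ucker sections. A secondary but nontrivial issue is the bookkeeping of loop assignments when transitioning to the common ground set $\mathcal{A}$.
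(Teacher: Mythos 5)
Your proposal aims at the right structural fact---that $M(\mathcal{A}|_{U_2})$ is a rank-preserving weak-map image of $M(\mathcal{A}|_{U_1})$, which for equal-rank matroids on a common ground set is equivalent to the basis containment $\mathbf{B}_2\subseteq\mathbf{B}_1$ used in the paper---but the route you propose to establish it has a genuine gap that the paper avoids entirely.

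The paper does not prove the dimension inequality $\dim(U_1\cap X)\le\dim(U_2\cap X)$ by any limiting argument. Instead it re-uses the explicit combinatorial characterization established inside the proof of Theorem~\ref{main}: a subspace $U$ lies in $\mathcal{S}_T$ if and only if $L_U(\mathcal{A})=\{X\in L_k(\mathcal{A})\mid T\not\subseteq\sigma X\}$. Since $T_1\le T_2$ means $T_2\subseteq T_1$ in $\mathbb{R}^{\binom{[n]}{k}}$, the condition $T_2\not\subseteq\sigma X$ forces $T_1\not\subseteq\sigma X$, giving $L_{U_2}(\mathcal{A})\subseteq L_{U_1}(\mathcal{A})$ immediately, hence $\mathbf{B}_2\subseteq\mathbf{B}_1$ by Corollary~\ref{coro-2}. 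No topology is needed. By contrast, the step you flag yourself---showing that $\operatorname{relint}(T_1)\cap\Delta(Gr(k,n))$ actually accumulates at every Pl\"ucker point in $\operatorname{relint}(T_2)$---is not a routine semicontinuity observation. It amounts to a transversality statement about how the Pl\"ucker variety meets the strata of $\mathcal{A}^{(k)}$, and nothing in the paper (including the non-emptiness of matroid strata, which is not asserted) supplies it. Without that, the limiting family $U^{(t)}$ need not exist and the key lemma is unproven. Note also that Theorem~\ref{main} does not assert that $\mathcal{S}_{T_1}$ is nonempty, so even the starting point of your degeneration could fail vacuously.

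Once the weak-map relation is granted, your conclusions are fine: part (1) is the elementary fact that independent sets persist under weak-map preimage, and for part (2) citing Lucas's theorem on Whitney numbers under rank-preserving weak maps is legitimate, though the paper gives a self-contained derivation of the same inequality via Whitney's NBC theorem and the observation that every circuit (hence every broken circuit) of $M_1$ contains one of $M_2$. The take-away: replace the topological degeneration by the lattice-theoretic identity $L_U(\mathcal{A})=\{X\in L_k(\mathcal{A})\mid T\not\subseteq\sigma X\}$ from the proof of Theorem~\ref{main}, and the argument closes cleanly.
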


\section{$k$-Adjoint of Hyperplane Arrangement}\label{Sec-2}

In this section,
the basic properties of the $k$-adjoint are preliminarily explored, along with the motivation behind our construction.
We also calculate the $k$-adjoint of the product of two hyperplane arrangements. As a consequence, we obtain the $k$-adjoint of the Boolean arrangement.

Given a hyperplane arrangement $\A$ in $\mathbb{F}^n$,
let $L_{k}(\mathcal{A})=\left\{X \in L(\mathcal{A})  \mid \rk(X)=k\right\}$.
Recall Definition \ref{k-adjoint-def} that for each subspace $X \in L_k(\A)$, its $k$-adjoint $H(X)$ is a hyperplane in $\mathbb{F}^{{[n]}\choose{k}}$ given by 
\begin{align}\label{k-adj}
	H(X)=\Big\{(x_I)_{I\in\binom{[n]}{k}} \,\big|\,\sum_{I\in\binom{[n]}{k}}a_I(X)\cdot x_I = 0\Big\},
\end{align}
where the coeﬃcients
\[
a_I(X)=(-1)^{\frac{1}{2}k(k+1)+\sum_{i\in I}i}\cdot\Delta_{[n]\setminus I}(X).
\]
Then the collection of all these $H(X)$ forms the $k$-adjoint $\A^{(k)}$ of $\A$.

\begin{remark}
	The concept of $k$-adjoint is a generalization of adjoint arrangement due to Bixby and Coullard \cite{Bixby-Coullard1988}.
	For $k=n-1$, all members of $L_{n-1}(\A)$ are one-dimensional subspaces,
	denoted by
	$L_{n-1}(\mathcal{A})=\{\mathbb{F}{\bm u}_1,\mathbb{F}{\bm u}_2,\ldots,\mathbb{F}{\bm u}_{s}\}$,
	where each $\mathbb{F}{\bm u}_i$ is the subspace spanned by ${\bm u}_i$.
	Let ${\bm u}_i=(u_{i,1}, u_{i,2}, \ldots, u_{i,n})^T$.
	By invoking \eqref{k-adj}, we get
	\begin{align}\label{adjoint-arrangement-k}
		H(\mathbb{F}{\bm u}_i)
		=\Big\{({x}_1, {x}_2, \cdots, {x}_n) \in \mathbb{F}^n \,\big|\,\sum_{j=1}^{n}(-1)^{j}u_{i,j}{x}_j=0\Big\}.
	\end{align}
	Implicitly stated by Bixby and Coullard,
	the adjoint arrangement
	${\cal K} =
	\{{K}_{{\bm u}_1},{K}_{{\bm u}_2},\ldots,{K}_{{\bm u}_s}\}$ is a linear arrangement in $\mathbb{F}^n$,  where ${K}_{{\bm u}_i}$ is defined by
	\begin{align}\label{adjoint-arrangement-0}
		{K}_{{\bm u}_i}
		=\Big\{({x}_1, {x}_2, \cdots, {x}_n) \in \mathbb{F}^n\,\big|\, \sum_{j=1}^{n}u_{i,j}{x}_j=0\Big\}.
	\end{align}
	By comparing \eqref{adjoint-arrangement-k} and \eqref{adjoint-arrangement-0},
	we see that
	$L(\A^{(n-1)})$ is indeed isomorphic to $L(\mathcal{K})$,
	which implies our $k$-adjoint generalizes Bixby and Coullard's adjoint.
	
	Additionally, it is worth noting that there are three trivial special cases to be considered:
	\begin{itemize}
		\item The $0$-adjoint of $\A$ is the origin in $\mathbb{F}$;
		\item The $1$-adjoint of $\A$ is itself;
		\item The $n$-adjoint of  $\A$ is the empty arrangement in $\mathbb{F}$.
	\end{itemize}
\end{remark}

The following lemma characterizes a relationship between a given $k$-dimensional subspace and the elements in $L_k(\A)$,
which is the basis for defining $k$-adjoint.

\begin{lemma}\label{lem-2.1}
	Let $U\in\mathrm{Gr}(k, n)$ and $X \in L_k(\mathcal{A})$ with $0 < k < n$. Then $\mathbb{F}^n=U\oplus X$ if and only if $\Delta(U)\notin H(X)$.
\end{lemma}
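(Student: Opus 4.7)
The plan is to reduce the direct-sum condition to the non-vanishing of an $n\times n$ determinant, and then expand that determinant by Laplace along the rows coming from $U$ to recognize the defining linear form of $\sigma X$.

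First I would observe that $\dim X = n-k$ and $\dim U = k$, so $\mathbb{F}^n = X\oplus U$ is equivalent to $X\cap U=\{0\}$, which in turn is equivalent to the $n\times n$ block matrix
\[
M=\begin{pmatrix} B \\ A \end{pmatrix}
\]
having nonzero determinant, where $B$ is any $(n-k)\times n$ matrix whose rows span $X$ and $A$ is any $k\times n$ matrix representative of $U$. This step is immediate from the dimension count.

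Next I would apply the Laplace expansion of $\det M$ along the last $k$ rows (those of $A$). This yields
\[
\det M=\sum_{I\in\binom{[n]}{k}}(-1)^{\sum R+\sum I}\Delta_{[n]-I}(X)\,\Delta_I(U),
\]
where $R=\{n-k+1,\ldots,n\}$ indexes the rows chosen from $A$, so $\sum R=\tfrac{(2n-k+1)k}{2}$. Comparing the exponent $\sum R+\sum_{i\in I}i$ with the exponent appearing in $a_I$, namely $\sum_{i\in I}i-\tfrac{|I|(|I|+1)}{2}$, one checks that the two exponents differ by $(n+1)k$, which is a constant independent of $I$. Hence there is a global sign $\varepsilon\in\{\pm 1\}$ (depending only on $n,k$) such that
\[
\det M=\varepsilon\sum_{I\in\binom{[n]}{k}}a_I\,\Delta_I(U).
\]

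Finally, since the right-hand side is (up to the nonzero scalar $\varepsilon$) exactly the value of the linear form defining $\sigma X$ evaluated at $\Delta(U)$, we conclude that $\det M\neq 0$ iff $\Delta(U)\notin \sigma X$, which together with the first step gives the desired equivalence. The only delicate part is the sign bookkeeping in the Laplace expansion; everything else is a direct dimension argument. Independence of the conclusion from the choice of matrix representatives of $X$ and $U$ is automatic, since multiplying $B$ or $A$ by a nonzero scalar (or passing to another basis) rescales both $\det M$ and $\Delta(U)$, and the Plücker vector $\Delta(X)$, by nonzero scalars, preserving the vanishing condition.
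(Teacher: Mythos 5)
Your proof is correct and follows essentially the same approach as the paper: reduce the direct-sum condition to the non-vanishing of the determinant of the stacked matrix, apply Laplace expansion, and identify the resulting linear form with the defining form of $\sigma X$. The only cosmetic difference is that you place the rows of $X$ on top and expand along the last $k$ rows (the paper places $U$ on top and expands along the first $k$ rows, which produces the exponent in $a_I$ directly without the global sign $\varepsilon=(-1)^{(n+1)k}$), and your sign bookkeeping is correctly carried out.
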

\begin{proof}
Let $A_U$ and $A_X$ be matrix representatives of $U$ and $X$ respectively. Note that $X$ is of dimension $n-k$, and hence we define an $n \times n$ matrix by
\[
M_{U,X} = \begin{bmatrix}
	A_U \\
	A_X
\end{bmatrix}.
\]
Then $\mathbb{F}^n = U \oplus X$ if and only if $\det M_{U,X} \neq 0$. Employing Laplace’s expansion theorem on the first $k$ rows, we obtain
\[
\det M_{U, X} = \sum_{I\in\binom{[n]}{k}}(-1)^{\frac{1}{2}k(k + 1)+\sum_{i\in I}i}\cdot\Delta_{[n]\setminus I}(A_{X})\cdot\Delta_I(A_U).
\]
It immediately follows from the definition of $H(X)$ in \eqref{k-adj} that $\mathbb{F}^n=U\oplus X$ if and only if $\Delta(U)\notin H(X)$.
\end{proof}

In the following,
we shall discuss the $k$-adjoint of the product of two hyperplane arrangements.
Denote by $(\A,\mathbb{F}^n)$ the hyperplane arrangement $\A$ in $\mathbb{F}^n$.
The product $(\mathcal{A} \times \mathcal{B},\mathbb{F}^{n+m})$ of two hyperplane arrangements $(\A,\mathbb{F}^n)$ and $(\B,\mathbb{F}^m)$ is defined by
$$
\mathcal{A} \times \mathcal{B}=\left\{H \oplus \mathbb{F}^m \mid H \in \mathcal{A}\right\} \cup\left\{\mathbb{F}^n \oplus K \mid K \in \mathcal{B}\right\} .
$$
A hyperplane arrangement is called {\it reducible} if it can be written as a product of two hyperplane arrangements.
There is a natural isomorphism of lattices
\begin{align}\label{prod-arr}
\pi\colon L\left(\mathcal{A}\right) \times L\left(\mathcal{B}\right) \longrightarrow L\left(\mathcal{A} \times \mathcal{B}\right)
\end{align}
given by the map $\pi\left(Y, Z\right)=Y \oplus Z$, for example
see \cite[Proposition 2.14]{Orlik-Terao}.

\begin{defi}
The tensor product
$\left(\A \otimes \B, \mathbb{F}^n \otimes \mathbb{F}^m\right)$  of two hyperplane arrangements $(\A,\mathbb{F}^n)$ and $(\B,\mathbb{F}^m)$ is defined by
\[
\A \otimes \B=
\left\{H \otimes \mathbb{F}^m+\mathbb{F}^n \otimes K \mid H \in \A \,\,{\text {and}}\,\, K \in \B \right\} .
\]
\end{defi}
\vspace{-\parskip}\noindent
Note that this is well-defined since
$\dim(H \otimes \mathbb{F}^m + \mathbb{F}^n \otimes K) = nm-1$
for $H\in\A$ and $K\in \B$.

\begin{lemma}\label{tensor-arr}
	Let $\{e_i\}_{1\le i \le n}$ and $\{e_j'\}_{1 \le j \le m}$ be bases of $\mathbb{F}^n$ and $\mathbb{F}^m$ respectively.
	Assume that $H \subseteq \mathbb{F}^n$ and $K \subseteq \mathbb{F}^m$ are hyperplanes defined by
	\begin{align*}
	H &= \Big\{\sum_{1\leq i \leq n}{ x_i} e_i \,\big|\, a_1{x}_1+a_2{x}_2+\cdots + a_n{x}_n=0\Big\}\\[6pt]
	K &= \Big\{\sum_{1\leq j \leq m} {y}_j e_j' \,\big|\, b_1{ y_1}+b_2{y_2}+\cdots + b_m{ y_m}=0\Big\}.
	\end{align*}
	Then the hyperplane $H \otimes \mathbb{F}^m+\mathbb{F}^n \otimes K$ of
	$\,\mathbb{F}^n \otimes \mathbb{F}^m$ is given by
	\[
	H \otimes \mathbb{F}^m+\mathbb{F}^n \otimes K
	=
	\Big\{ \sum_{1\le i\le n} \sum_{1\le j\le m}
			{z}_{i,j}
			e_i\otimes e_j'\,\,\big|\,
			\sum_{1\le i\le n} \sum_{1\le j\le m}
	{a_ib_j}{z}_{i,j}=0
	\Big\}.\]
\end{lemma}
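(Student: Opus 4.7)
The plan is to interpret both subspaces as kernels of suitable linear functionals on $\mathbb{F}^n\otimes \mathbb{F}^m$ and to pin down equality by a dimension count. Let $f\colon \mathbb{F}^n\to \mathbb{F}$ and $g\colon \mathbb{F}^m\to \mathbb{F}$ be the functionals $f=\sum_i a_i x_i$ and $g=\sum_j b_j y_j$, so that $H=\ker f$ and $K=\ker g$. Their tensor product $f\otimes g\colon \mathbb{F}^n\otimes \mathbb{F}^m\to \mathbb{F}\otimes \mathbb{F}=\mathbb{F}$ sends $e_i\otimes e_j'$ to $a_i b_j$, and hence, after writing a general element as $\sum_{i,j} z_{i,j}\,e_i\otimes e_j'$, coincides with the linear form $\sum_{i,j} a_i b_j\, z_{i,j}$. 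Thus the right-hand side of the claimed identity is precisely $\ker(f\otimes g)$.

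Next I would establish the inclusion $H\otimes \mathbb{F}^m+\mathbb{F}^n\otimes K\subseteq \ker(f\otimes g)$. For a decomposable element $h\otimes v$ with $h\in H$ we have $(f\otimes g)(h\otimes v)=f(h)g(v)=0$, and symmetrically $(f\otimes g)(u\otimes k)=0$ for $k\in K$. Extending by bilinearity, every element of $H\otimes \mathbb{F}^m$ and of $\mathbb{F}^n\otimes K$, and therefore every element of their sum, lies in $\ker(f\otimes g)$.

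Finally I would upgrade the inclusion to an equality by a dimension argument. The functional $f\otimes g$ is nonzero, since choosing $i_0,j_0$ with $a_{i_0}\neq 0$ and $b_{j_0}\neq 0$ gives $(f\otimes g)(e_{i_0}\otimes e_{j_0}')=a_{i_0}b_{j_0}\neq 0$, so $\ker(f\otimes g)$ is a hyperplane of dimension $nm-1$. The paragraph preceding the lemma already records that $\dim\bigl(H\otimes \mathbb{F}^m+\mathbb{F}^n\otimes K\bigr)=nm-1$, so the inclusion is forced to be an equality.

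There is no serious obstacle here; the only point requiring a moment of thought is verifying the dimension $nm-1$ for the left-hand side. If this needs to be checked in place, I would use $\dim(U+W)=\dim U+\dim W-\dim(U\cap W)$ together with the identification $H\otimes \mathbb{F}^m\cap \mathbb{F}^n\otimes K=H\otimes K$, which follows by extending bases of $H$ and $K$ to bases of $\mathbb{F}^n$ and $\mathbb{F}^m$ and reading off the coordinates in the resulting tensor basis. The rest of the argument is bookkeeping on decomposable tensors.
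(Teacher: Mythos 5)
Your proof is correct and takes essentially the same approach as the paper: show the inclusion $H\otimes\mathbb{F}^m+\mathbb{F}^n\otimes K\subseteq P$ (where $P$ is the right-hand side), then conclude equality because both are hyperplanes of dimension $nm-1$. The paper phrases the inclusion step directly in coordinates on decomposable tensors $v\otimes w$, while you package the same computation as $P=\ker(f\otimes g)$ and note $(f\otimes g)(h\otimes v)=f(h)g(v)$; this is a cosmetic reframing, not a different argument.
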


\begin{proof}
Let
\[P= \Big\{ 
 \sum_{1\le i\le n} \sum_{1\le j\le m}
			{z}_{i,j}
			e_i\otimes e_j'\,\,\big|\,
 \sum_{1\le i\le n} \sum_{1\le j\le m}
	{a_ib_j}{z}_{i,j}=0
\Big\}.
\]
For any $v=\sum_{1\le i\le n} {x}_ie_i\in \mathbb{F}^{n}$
and $w=\sum_{1\le j\le m} {y}_je_j'\in \mathbb{F}^{m}$,
we have
\[v\otimes w=  \sum_{1\le i\le n} \sum_{1\le j\le m}
			{ x_i} { y_j}
			e_i\otimes e_j' \in P\]
			if and only if
	\[
	 \sum_{1\le i\le n} \sum_{1\le j\le m}
	a_ib_j{x}_i{y}_j=
	\Big(\sum_{1\leq i \leq n}a_i{x}_i\Big)\Big(\sum_{1\leq j 	\leq m}b_j{y}_j\Big)=0.
	\]
Thus $v \in H$ or $w \in K$ implies $v \otimes w \in P$.
Then $H \otimes \mathbb{F}^m+\mathbb{F}^n \otimes K \subseteq P$.
Moreover, $H \otimes \mathbb{F}^m+\mathbb{F}^n \otimes K = P$ since they are of the same dimension.
\end{proof}

The following proposition is a decomposition formula of the $k$-adjoint of reducible hyperplane arrangements.

\begin{prop}\label{tensor}
Let $(\A,\mathbb{F}^{n})$ and $(\B,\mathbb{F}^{m})$ be two  hyperplane arrangements.
Then we have
\begin{align*}
(\A \times \B)^{(k)}= \prod_{i= 0}^{k}\A^{(i)} \otimes \B^{(k-i)}.
\end{align*}
In general, we have
\begin{align*}
(\A_1 \times \A_2\times \cdots\times \A_q)^{(k)}= \prod_{ i_1+ i_2+\cdots+ i_q =k }\A_1^{(i_1)} \otimes \A_2^{(i_2)}\otimes
\cdots  \otimes\A_q^{(i_q)}.
\end{align*}
\end{prop}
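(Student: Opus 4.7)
The plan is to establish the formula for $q=2$ and then induct on $q$. By the lattice isomorphism $\pi$ of \eqref{prod-arr}, which is graded with respect to rank, one has
\[
L_k(\A\times\B)\;=\;\bigsqcup_{i=0}^{k}\bigl\{Y\oplus Z\mid Y\in L_i(\A),\;Z\in L_{k-i}(\B)\bigr\},
\]
so the hyperplanes on both sides of the claimed identity are indexed by the same set. On the ambient side, the bijection $I\mapsto(I\cap[n],\,(I\setminus[n])-n)$ between $\binom{[n+m]}{k}$ and $\bigsqcup_{i=0}^{k}\binom{[n]}{i}\times\binom{[m]}{k-i}$ induces the canonical identification
\[
\mathbb{F}^{\binom{[n+m]}{k}}\;\cong\;\bigoplus_{i=0}^{k}\mathbb{F}^{\binom{[n]}{i}}\otimes\mathbb{F}^{\binom{[m]}{k-i}},
\]
which is precisely the ambient space of $\prod_{i=0}^{k}\A^{(i)}\otimes\B^{(k-i)}$; under it the coordinate $x_I$ corresponds to $z_{I_1,I_2'}$, where $I_1=I\cap[n]$ and $I_2'=(I\setminus[n])-n$. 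It thus suffices to match hyperplanes pair by pair.

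Fix $Y\in L_i(\A)$ and $Z\in L_{k-i}(\B)$, and choose matrix representatives $A_Y,A_Z$. Then $Y\oplus Z$ is represented by $\begin{bmatrix}A_Y & 0\\ 0 & A_Z\end{bmatrix}$, and its submatrix on the columns $[n+m]-I$ is block-diagonal with a top block of size $(n-i)\times(n-|I_1|)$ and a bottom block of size $(m-k+i)\times(m-|I_2'|)$. Since this submatrix has total side $n+m-k$, the minor vanishes unless both blocks are square, forcing $|I_1|=i$; in that case it factors as
\[
\Delta_{[n+m]-I}(Y\oplus Z)\;=\;\Delta_{[n]-I_1}(Y)\cdot\Delta_{[m]-I_2'}(Z).
\]
Substituting into \eqref{k-adj}, the hyperplane $\sigma(Y\oplus Z)$ is cut out by
\[
\sum_{\substack{I\in\binom{[n+m]}{k}\\ |I\cap[n]|=i}}(-1)^{s(I)-\binom{k+1}{2}}\Delta_{[n]-I_1}(Y)\,\Delta_{[m]-I_2'}(Z)\,x_I\;=\;0,
\]
where $s(J):=\sum_{j\in J}j$.

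On the right-hand side, the hyperplane of $\A^{(i)}\otimes\B^{(k-i)}$ indexed by $(Y,Z)$ is $\sigma Y\otimes\mathbb{F}^{\binom{[m]}{k-i}}+\mathbb{F}^{\binom{[n]}{i}}\otimes\sigma Z$, which by Lemma~\ref{tensor-arr} is cut out by $\sum_{I_1,I_2'}a_{I_1}b_{I_2'}z_{I_1,I_2'}=0$ with coefficients $a_{I_1}=(-1)^{s(I_1)-\binom{i+1}{2}}\Delta_{[n]-I_1}(Y)$ and $b_{I_2'}=(-1)^{s(I_2')-\binom{k-i+1}{2}}\Delta_{[m]-I_2'}(Z)$. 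Inside the full arrangement product this hyperplane is extended by the complementary summands, so under the identification above it constrains exactly the coordinates $x_I$ with $|I\cap[n]|=i$. Using $s(I)=s(I_1)+s(I_2')+n(k-i)$, the exponent in the $\sigma(Y\oplus Z)$ equation differs from that in the tensor equation by $n(k-i)+\binom{i+1}{2}+\binom{k-i+1}{2}-\binom{k+1}{2}$, a constant depending only on $n,k,i$. Hence the two equations agree up to a global nonzero scalar and define the same hyperplane, settling the $q=2$ case.

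The general formula follows by induction on $q$ from the two-factor identity, together with the associativity of the arrangement product and the distributivity of $\otimes$ through it. The main technical obstacle is the sign reconciliation in the previous paragraph: one must carry out the additive splitting $s(I)=s(I_1)+s(I_2')+n(k-i)$ and verify that the resulting triangular-number discrepancy is independent of $I$. The degenerate endpoints $i\in\{0,k\}$ (and $i=n$ when $k\geq n$) should be checked directly against the conventions for the $0$-adjoint and top adjoint stated in the opening remark, so that the corresponding tensor factors act as units and the bookkeeping remains consistent throughout the induction.
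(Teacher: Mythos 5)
Your proof is correct and follows essentially the same route as the paper: both use the graded lattice isomorphism $\pi$ to index hyperplanes, the block-diagonal matrix representative of $Y\oplus Z$ to factor the complementary minors, and Lemma~\ref{tensor-arr} to identify the resulting linear form with the tensor hyperplane, with the sign discrepancy absorbed as a global nonzero scalar. Your sign bookkeeping is in fact slightly more careful than the paper's — the constant exponent works out to $(n-i)(k-i)$ rather than the $(-1)^{k(k-\ell)}$ printed in the paper — but since it is a global scalar on the defining equation this discrepancy is immaterial to the identity of hyperplanes, and the two arguments are otherwise the same.
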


\begin{proof}
Restricting the natural isomorphism in \eqref{prod-arr} to $L_k(\A \times \mathcal{B})$ ,
we get the bijection
\[\pi^{-1}\colon L_k(\A \times \mathcal{B}) \longrightarrow \bigcup_{i=0}^k{L_i(\A) \times L_{k-i}(\mathcal{B})}.\]
Given $X\in L_k(\A\times \B)$,
let
\[\pi^{-1}(X) = (Y,Z)\in L_\ell(\A)\times L_{k-\ell}(\mathcal{B})\]
for some $ \ell$,
and hence $X=Y\oplus Z$.
Let $A_Y$ and $A_Z$ be matrix representatives of $Y$ and $Z$ respectively.
Then we obtain a matrix representative of $X$ given by
\begin{equation*}
{A_X} =
\begin{tikzpicture}[baseline=(matrix.center)]
  \matrix (matrix) [matrix of math nodes, left delimiter=[, right delimiter={]}] {
    A_{Y} & O \\
    O & A_{Z} \\
  };

  \draw [decorate,decoration={brace,amplitude=3pt,raise=1pt}]
    ([xshift=15pt]matrix-1-2.north east) -- ([xshift=15pt]matrix-1-2.south east) node[midway, right=4pt] {\small $n-\ell$};
  \draw [decorate,decoration={brace,amplitude=3pt,raise=1pt}]
    ([xshift=11pt]matrix-2-2.north east) -- ([xshift=11pt]matrix-2-2.south east) node[midway, right=4pt] {\small $m-(k-\ell)$};

\draw [decorate,decoration={brace,amplitude=5pt,mirror,raise=2pt}]
    ([yshift=-4pt]matrix-2-1.south west) -- ([yshift=-2pt]matrix-2-2.south east) node[midway,below=4pt] {\small $n+m$};
\end{tikzpicture}
\end{equation*}
where $O$ denotes the zero matrix of the corresponding size.
For any $J\in {[n+m] \choose k}$,
write $J=\{j_1,j_2,\ldots,j_{k}\}$ with
$j_1<j_2<\ldots<j_{k}$.
Let $J_1=\{j_1,j_2,\ldots,j_{\ell}\}$ and $J_2=\{j_{\ell+1},j_{i+2},\ldots,
j_{k}\}$.
Then, elementary linear algebra implies that
\begin{align*}
	\Delta_{[n+m]\setminus J}(A_X)=
\begin{cases}
      \Delta_{[n]\setminus J_1}(A_Y)\Delta_{[m]\setminus (J_2-n)}(A_Z), &\text{if } j_\ell\leq n<j_{\ell+1};\\[3pt]
      0, &\text{otherwise}.
    \end{cases}
\end{align*}
Here we use ${J}_2-n$ to denote the set obtained by decreasing all elements in $J_2$ by $n$.
Combining this with the definition of $k$-adjoint arrangement in \eqref{k-adj},
we get
\begin{align*}
H(X) = \Big\{(x_I)_{I\in\binom{[n+m]}{k}} \,\big|\,
(-1)^{k(k-\ell)}\sum_{I_1\in {[n]\choose \ell}}\sum_{I_2\in {[m]\choose k- \ell}}
a_{I_1}(Y)b_{I_2}(Z){x}_{I_1\cup(I_2+n)}=0\Big\},
\end{align*}
where
\[a_{I_1}(Y)= (-1)^{\frac{1}{2}\ell(\ell+1) + \sum_{i_1\in I_1} i_1}
\Delta_{[n]\setminus I_1}(A_Y)\]
and
 \[b_{I_2}(Z)= (-1)^{\frac{1}{2}(k-\ell)(k-\ell+1) + \sum_{i_2\in I_2} i_2}
\Delta_{[m]\setminus I_2}(A_Z).\]
By Lemma \ref{tensor-arr},
we see that
\[H(X) =
\Big(H(Y) \otimes \mathbb{F}^{[m]\choose k-\ell} + \mathbb{F}^{[n]\choose \ell} \otimes H(Z)\Big)\oplus
\Big(\bigoplus_{\substack{0\leq i \leq k\\[2pt] i\neq \ell}}
\mathbb{F}^{[n]\choose k-i}\otimes\mathbb{F}^{[m]\choose i}\Big),\]
which completes the proof.
\end{proof}

We shall provide a concrete example of the $k$-adjoint.
Recall that the Boolean arrangement in $\mathbb{F}^n$ is defined as
\begin{align}\label{Bool-arr}
	B_n=\{ {x}_i = 0\mid i\in [n]\}.
\end{align}

\begin{coro}\label{boolean}
The $k$-adjoint of Boolean arrangement $B_n^{(k)}$ is the Boolean arrangement $B_{n \choose k}$ in
$\mathbb{F}^{[n] \choose k}$.
\end{coro}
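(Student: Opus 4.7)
The plan is to compute $B_n^{(k)}$ directly from Definition \ref{k-adjoint-def}. Every rank-$k$ element of $L(B_n)$ is the intersection of a unique $k$-subfamily of coordinate hyperplanes, so $L_k(B_n)$ is in bijection with $\binom{[n]}{k}$ via $I\mapsto X_I=\{x\in\mathbb{F}^n\mid x_i=0\text{ for all }i\in I\}$. Choosing the obvious matrix representative of $X_I$ whose rows are the standard basis vectors $e_j$ for $j\in[n]\setminus I$, the Pl\"ucker coordinate $\Delta_{[n]-J}(X_I)$ is nonzero precisely when $J=I$, in which case it equals $\pm 1$.

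Substituting into the defining equation \eqref{k-adj} of $\sigma X_I$, only the single term indexed by $I$ survives, so
\[
\sigma X_I=\left\{(x_J)_{J\in\binom{[n]}{k}}\colon x_I=0\right\},
\]
which is the $I$-th coordinate hyperplane of $\mathbb{F}^{\binom{[n]}{k}}$. As $I$ ranges over $\binom{[n]}{k}$, the hyperplanes $\sigma X_I$ exhaust all coordinate hyperplanes of $\mathbb{F}^{\binom{[n]}{k}}$, which is by definition \eqref{Bool-arr} the Boolean arrangement $B_{\binom{n}{k}}$.

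Alternatively, one could invoke the decomposition formula in Proposition \ref{tensor}: writing $B_n$ as a product of $n$ copies of the one-hyperplane arrangement $\{0\}\subset\mathbb{F}$, every factor $A_j^{(i_j)}$ with $i_j\in\{0,1\}$ is again that one-hyperplane arrangement, and each tuple $(i_1,\ldots,i_n)$ with $i_1+\cdots+i_n=k$ corresponds to a unique $I\in\binom{[n]}{k}$, yielding exactly $\binom{n}{k}$ hyperplanes. This gives the same conclusion without explicit Pl\"ucker computations.

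There is essentially no obstacle here; the only bookkeeping is the sign $(-1)^{\sum_{i\in I}i-|I|(|I|+1)/2}$ in the definition of $a_I$, which affects only the nonzero scalar multiplying $x_I$ and hence leaves the hyperplane $\sigma X_I$ unchanged.
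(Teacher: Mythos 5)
Your direct Pl\"ucker computation is correct and is a genuinely different route from the paper's proof. The paper establishes the corollary in one line by writing $B_n = B_1 \times B_1 \times \cdots \times B_1$ and invoking the decomposition formula of Proposition \ref{tensor} together with the identities $B_1^{(0)}=B_1$, $B_1^{(1)}=\emptyset$, and $B_1\otimes B_1 = B_1$. You instead work directly at the level of matrix representatives of the rank-$k$ flats $X_I$, observing that $\Delta_{[n]-J}(X_I)$ vanishes unless $J=I$, so that $\sigma X_I$ is the coordinate hyperplane $\{x_I=0\}$; as $I$ ranges over $\binom{[n]}{k}$ these exhaust all coordinate hyperplanes of $\mathbb{F}^{\binom{[n]}{k}}$. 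This is self-contained, avoids Proposition \ref{tensor} and Lemma \ref{tensor-arr} entirely, and makes the coordinate-hyperplane identification transparent. Your alternative remark essentially reproduces the paper's approach, except that you treat $B_1^{(1)}$ as the one-hyperplane arrangement rather than the empty arrangement cited by the paper; either convention for the $n$-adjoint of an essential arrangement in $\mathbb{F}^n$ leads to the same count of $\binom{n}{k}$ hyperplanes, so the discrepancy is cosmetic. In short: the paper's proof is shorter given the machinery already developed, while yours is more elementary and does not rely on it.
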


\begin{proof}
Note that
\[B_n= \underbrace{ B_1 \times B_1 \times\cdots\times B_1}_{n}.\]
Applying Proposition \ref{tensor},
the result follows from facts that
$B_1^{(0)}=B_1$,  $B_1^{(1)}= \varnothing$ and $B_1\otimes B_1 = B_1$.
\end{proof}

\begin{exam}
We illustrate by taking $n=4$ and $k=2$
and list all the subspaces of rank $2$ in $L(B_4)$ by their matrix representatives as follows:
\begin{align*}
&X_1= \begin{bmatrix}
1 & 0 & 0 & 0\\
0 & 1 & 0 & 0
\end{bmatrix},
\;
X_2= \begin{bmatrix}
1 & 0 & 0 & 0\\
0 & 0 & 1 & 0
\end{bmatrix},
\;
X_3 = \begin{bmatrix}
1 & 0 & 0 & 0\\
0 & 0 & 0 & 1
\end{bmatrix},\\[5pt]
&X_4 =
\begin{bmatrix}
0 & 1 & 0 & 0\\
0 & 0 & 1 & 0
\end{bmatrix},
\;
X_5=\begin{bmatrix}
0 & 1 & 0 & 0\\
0 & 0 & 0 & 1
\end{bmatrix},
\;
X_6=\begin{bmatrix}
0 & 0 & 1 & 0\\
0 & 0 & 0 & 1
\end{bmatrix}.
\end{align*}
By the definition of $k$-adjoint \eqref{k-adj}, we have
\begin{align*}
	&H(X_1) \colon {x}_{34}=0,\quad
	H(X_2) \colon -{x}_{24}=0,\quad
	H(X_3) \colon {x}_{23}=0,\quad \\[5pt]
	&H(X_4) \colon {x}_{14}=0,\quad
	H(X_5) \colon -{x}_{13}=0,\quad
	H(X_6)  \colon {x}_{12}=0.
\end{align*}
We see that $B_4^{(2)}$ is indeed the Boolean arrangement $B_{6}$.
\end{exam}

\section{$\A$-Decompositions of the Grassmannian}\label{Sec-3}

This section focuses on elucidating three decompositions of the real Grassmannian via a given hyperplane arrangement $\A$.
Inspired by the celebrated work of Gelfand, Goresky, MacPherson and Serganova \cite{Gelfand-Goresky-MacPherson-Serganova1987},
we shall introduce the $\A$-matroid decomposition and the refined $\A$-Schubert decomposition.
The primary goal of this section is to show that the $\A$-matroid decomposition, the refined $\A$-Schubert decomposition, and our $\A$-adjoint decomposition are actually the same decomposition of the Grassmannian, as shown in Theorem \ref{main}.
	
\subsection{Three $\A$-Decompositions of the Grassmannian}\label{Sec-3.1}

Throughout this section, we assume that  $$\A = \{H_1, \ldots, H_m\}$$ is a hyperplane arrangement in the Euclidean space $\mathbb{R}^n$,
where each $H_i$ is given by its normal vector $\alpha_i$ for $i = 1, 2, \ldots, m$, 
that is,
\[
H_i = \{v \in \mathbb{R}^n \mid \langle\alpha_i, v\rangle = 0\}
\]
with $\langle \cdot, \cdot\rangle$ denoting the standard inner product in $\mathbb{R}^n$.
We now proceed to provide detailed definitions for the three $\A$-decompositions of the Grassmannian.

First,
let us review the definition of the $\A$-adjoint decomposition as mentioned in the introduction.
It is straightforward to verify that the \( k \)-adjoint \(\mathcal{A}^{(k)}\) induces a disjoint decomposition  
\[
\mathbb{R}^{\binom{[n]}{k}} = \bigsqcup_{P \in L(\mathcal{A}^{(k)})} P^{\circ},
\]
where \( P^{\circ} \) is defined in \eqref{relint-def}.
This yields the following decomposition of \(\mathrm{Gr}(k, n)\).
\begin{defi}\label{A-adjoint}
	For each $P \in L(\mathcal{A}^{(k)})$, let
	\begin{align*}
		\mathcal{S}_P:=\{U\in\mathrm{Gr}(k, n)\mid\Delta(U)\in P^{\circ}\}.
	\end{align*}
	Then we call
\begin{equation}\label{TFG1}
	\mathrm{Gr}(k, n)=\bigsqcup_{\substack{P\in L(\mathcal{A}^{(k)})}}\mathcal{S}_P
\end{equation}
the \emph{$\A$-adjoint decomposition} of the Grassmannian and each $\mathcal{S}_P$ is called an \emph{$\A$-adjoint stratum}.
\end{defi}

Next we will define the $\A$-matroid decomposition of the Grassmannian. 
Refer to \cite{Oxley} for basic concepts on matroids.
For each $U \in \mathrm{Gr}(k, n)$, let $\beta_i = \mathrm{Proj}_U\alpha_i$ denote the orthogonal projection of $\alpha_i$ in $U$, where $\alpha_i$ is the normal vector of $H_i \in \mathcal{A}$ for $i = 1, \ldots, m$. 
We can construct a rank-$k$ matroid $\mathfrak{M}_{\mathcal{A}}(U)$ on $[m]$ by defining its rank function on subsets $I \subseteq [m]$ as $\rank(I) := \dim \mathrm{span}\{\beta_i \mid i \in I\}$.
Then we obtain a subclass
\[
\mathbf{M}(\mathcal{A}):=\{\mathfrak{M}_{\mathcal{A}}(U)\mid U\in\mathrm{Gr}(k, n)\}
\]
of matroids on the ground set $[m]$, associated with the hyperplane arrangement $\A$.

\begin{defi}\label{A-matroid}
For each matroid $\mathfrak{M} \in \mathbf{M}(\mathcal{A})$, let
\[
\Omega_{\mathcal{A}}(\mathfrak{M}) := \left\{ U \in \mathrm{Gr}(k, n) \,\big|\, \mathfrak{M}_{\mathcal{A}}(U) = \mathfrak{M} \right\}.
\]
Then we call
\begin{equation}\label{TFG2}
	\mathrm{Gr}(k, n) = \bigsqcup_{\mathfrak{M} \in \mathbf{M}(\mathcal{A})} \Omega_{\mathcal{A}}(\mathfrak{M})
\end{equation}
the \emph{$\mathcal{A}$-matroid decomposition} of the Grassmannian and each $\Omega_{\mathcal{A}}(\mathfrak{M})$ is called an \emph{$\mathcal{A}$-matroid stratum}.
\end{defi}

From the above definition, given $\mathfrak{M}\in\mathbf{M}(\mathcal{A})$ and $U_1,U_2 \in \Omega_{\A}(\mathfrak{M})$,
$\mathfrak{M}_{\mathcal{A}}(U_1)=\mathfrak{M}_{\mathcal{A}}(U_2) = \mathfrak{M}$
implies that the intersection lattices
$L(\A|_{U_1})$ and $L(\A|_{U_2})$ are isomorphic, see \cite[p. 425 Proposition 3.6]{Stanley}.
Therefore,
the $\A$-matroid decomposition \eqref{TFG2} indeed gives a classification of $L(\mathcal{A}|_{U})$ for all $k$-dimensional subspaces $U$. 
That is, \( L(\mathcal{A}|_{U}) \) remains isomorphic for all \( U \) within a fixed \(\mathcal{A}\)-matroid stratum.
It also should be noted that if we take $\A$ to be the Boolean arrangement $B_n$ given in \eqref{Bool-arr},
the $\A$-matroid decomposition coincides with GGMS's matroid decomposition in \cite[page 12, 1.2 Definition]{Gelfand-Goresky-MacPherson-Serganova1987}.

The last decomposition generalizes the classical Schubert decomposition by employing the hyperplane arrangement $\A$.
Basic notation and concepts on Schubert decomposition can be found in \cite{Fulton-1997}.

Let \(\boldsymbol{F}\colon \{0\} = F_0 \subsetneq F_1 \subsetneq \cdots \subsetneq F_n = \mathbb{R}^n\) be a flag where each \(F_i\) is an \(i\)-dimensional subspace of \(\mathbb{R}^n\), 
and let $\sigma=\{i_1,\ldots,i_k\}\subseteq [n]$ be a $k$-subset with elements ordered increasingly. 
The {\it Schubert cell} associated with \(\boldsymbol{F}\) and \(\sigma\) is defined as   
\begin{align}\label{def:sc}
\Omega_{\boldsymbol{F}}(\sigma):=\{U\in\mathrm{Gr}(k, n)\mid\dim (U\cap F_{i_l}) > \dim (U\cap F_{i_{l}-1}), \;1\leq l\leq k\}.
\end{align}
The {\it Schubert cell decomposition} means the disjoint union
\begin{equation}\label{tfg3}
\mathrm{Gr}(k, n)=\bigsqcup_{\sigma\in\binom{[n]}{k}}\Omega_{\boldsymbol{F}}(\sigma).
\end{equation}
Note that each maximal chain of $L(\mathcal{A})$ is a flag of $\mathbb{R}^n$. 
Let $\mathbf{mc}(L(\mathcal{A}))$ denote the set of maximal chains of $L(\mathcal{A})$. 
It is clear that  
\begin{align*}
\mathrm{Gr}(k, n) 
&=
\bigcap_{\boldsymbol{F} \in \mathbf{mc}(L(\mathcal{A}))}
\bigsqcup_{\sigma(\boldsymbol{F}) \in \binom{[n]}{k}}
\Omega_{\boldsymbol{F}}(\sigma(\boldsymbol{F}))\\[6pt]
& = \bigsqcup_{\sigma\colon \mathbf{mc}(L(\mathcal{A})) \to \binom{[n]}{k}}
\bigcap_{\boldsymbol{F} \in \mathbf{mc}(L(\mathcal{A}))}
\Omega_{\boldsymbol{F}}(\sigma(\boldsymbol{F})).
\end{align*}

\begin{defi}\label{A-Sucbert}
A \emph{Schubert symbol} of \( \mathcal{A} \) is a map \( \sigma\colon\mathbf{mc}(L(\mathcal{A}))\longrightarrow \binom{[n]}{k} \) such that  
\[
\Omega_{\A}(\sigma):=\bigcap_{\boldsymbol{F}\in\mathbf{mc}(L(\mathcal{A}))}\Omega_{\boldsymbol{F}}(\sigma(\boldsymbol{F}))\neq \varnothing.
\]
The \emph{refined $\A$-Schubert decomposition} refers to the disjoint decomposition  
\begin{equation}\label{TFG3}
\mathrm{Gr}(k, n)=\bigsqcup_{\sigma}\Omega_{\A}(\sigma),
\end{equation}  
where \( \sigma \) ranges over the Schubert symbols of \( \mathcal{A} \).
\end{defi}

\subsection{Three $\mathcal{A}$-Decompositions Are the Same}

This section is dedicated to proving Theorem \ref{main}, which establishes that the $\A$-adjoint decomposition, the $\A$-matroid decomposition, and the refined $\A$-Schubert decomposition defined in the previous subsection are the same decomposition of the Grassmannian.

Fixed $U \in \mathrm{Gr}(k, n)$, it follows from Lemma \ref{lem-2.1} that for any $X \in L_k(\A)$, $X$ satisfies $U\oplus X = \mathbb{R}^n$ if and only if $\Delta(U)\notin H(X)$. We introduce the following definition, which plays a key role in the proof of Theorem \ref{main}.

\begin{defi}\label{def:lu}
	For each vector subspace $U\in\mathrm{Gr}(k, n)$ with $0 < k < n$, let
	\begin{align*}
		L_U(\mathcal{A}):&=\{X\in L_k(\mathcal{A})\mid\Delta(U)\notin H(X)\}\\[6pt]
		&=\{X\in L_k(\mathcal{A})\mid U\oplus X=\mathbb{R}^n\};\\[6pt]
		L^U(\mathcal{A}):&=\{X\in L_k(\mathcal{A})\mid \Delta (U)\in H(X)\}\\[6pt]
		&=\{X\in L_k(\mathcal{A})\mid U\cap X\neq \{0\}\}.
	\end{align*}
\end{defi}

\begin{lemma}\label{lem-2}
Let $U$ be a vector subspace of $\mathbb{R}^n$. 
Then for each subset $I$ of $[m]$,
\[
\dim\Big(U\cap\bigcap_{i\in I}H_i\Big)=\dim U-\dim(\mathrm{span}\{\beta_i\mid i\in I\})
\]
and the rank function of the matroid $\mathfrak{M}_\mathcal{A}(U)$ satisfies
\[
\mathrm{rank}(I)=\dim U-\dim\Big(U\cap\bigcap_{i\in I}H_i\Big).
\]
\end{lemma}

\begin{proof}
When $|I| = 0$, that is, $I$ is empty, we have $\mathbb{R}^n=\bigcap_{i\in I}H_i$ and the result clearly holds. Consider the case $I$ with $|I|\geq 1$. Notice that
\[
U\cap\bigcap_{i\in I}H_i=\{v\in U \mid \langle v,\beta_i\rangle = 0\text{ for }i\in I\}.
\]
The dimension formula follows immediately. So does the rank formula.
\end{proof}

\begin{lemma}\label{coro-2}
Let $U\in\mathrm{Gr}(k, n)$ and $I\subseteq [m]$. Then the following statements are equivalent:
\begin{enumerate}
	\item[{\rm (1)}] $I$ is a basis of the matroid $\mathfrak{M}_\mathcal{A}(U)$.
	\item[{\rm (2)}] $|I| = k$ and $\{\beta_i \mid i\in I\}$ is linearly independent.
	\item[{\rm (3)}] $|I| = k$ and $\bigcap_{i\in I}H_i\in L_U(\mathcal{A})$.
\end{enumerate}
\end{lemma}
\begin{proof}
	Note that $k = \dim U$ and $\bigcap_{H \in \mathcal{A}} H=\{0\}$. We claim that $U=\mathrm{span}\{\beta_i \mid i\in [m]\}$. Suppose that $U':=\mathrm{span}\{\beta_i\mid i\in [m]\}$ is properly contained in $U$. Then $\mathrm{span}\{\alpha_i\mid i\in [m]\} \subseteq U'\oplus U^{\perp}$ is properly contained in $\mathbb{R}^n$, which contradicts $\mathbb{R}^n=\mathrm{span}\{\alpha_i\mid i\in [m]\}$. Now it is trivial that (1) is equivalent to (2).
	
	(2) $\Leftrightarrow$ (3) Let $|I| = k$ and $\{\beta_i \mid i \in I\}$ be linearly independent. Then $\{\beta_i \mid i \in I\}$ forms a basis of $U$, $\{\alpha_i \mid i \in I\}$ is linearly independent and $\dim(\bigcap_{i\in I}H_i)=n - k$. For each $v\in U\cap\bigcap_{i\in I}H_i$, let $v = \sum_{i\in I}a_i\beta_i$. We have
	\[
	\langle v,\alpha_j\rangle=\sum_{i\in I}a_i\langle\beta_i,\beta_j\rangle = 0,\quad \ j\in I.
	\]
	Since $\{\beta_i \mid i \in I\}$ is linearly independent, we see that $\det[\langle\beta_i,\beta_j\rangle]\neq 0$. It follows that all $a_i$ with $i \in I$ are zero, so $v = 0$. Since $\dim(U) + \dim(\bigcap_{i\in I}H_i) = n$, we conclude that $U\oplus\bigcap_{i\in I}H_i=\mathbb{R}^n$, and hence $\bigcap_{i\in I}H_i\in L_U(\mathcal{A})$.
	
	Conversely, let $|I| = k$ and $\mathbb{R}^n = U\oplus\bigcap_{i\in I}H_i$. It is trivial that $\alpha_i$ with $i\in I$ are linearly independent. Suppose $\{\beta_i \mid i \in I\}$ is linearly dependent. There exists a nonzero vector $v\in U$ such that $\langle v,\beta_i\rangle = 0$ for all $i\in I$. For each $i \in I$, let $\alpha_i=\beta_i+\gamma_i$, where $\gamma_i\in U^{\perp}$. Then $\langle v,\alpha_i\rangle=\langle v, \beta_i\rangle+\langle v,\gamma_i\rangle = 0$ for all $i\in I$. Thus $v\in U\cap\bigcap_{i\in I}H_i$, contradicting the given direct sum.
\end{proof}

\begin{proof}[Proof of Theorem \ref{main}]
	First, we show that the $\A$-adjoint decomposition \eqref{TFG1} is the same as the $\A$-matroid decomposition \eqref{TFG2}. 
Assume that $\mathcal{S}_P$ is a stratum of the $\mathcal{A}$-adjoint decomposition \eqref{TFG1}, where $P\in L(\mathcal{A}^{(k)})$.
 Let $U\in \mathcal{S}_P$. 
 It means that $\Delta(U)\in P^{\circ}$, which is equivalent to
	\[
	\Delta(U)\in P\quad\text{and} \quad \Delta(U)\notin Q
	\]
	for all $Q\in L(\mathcal{A}^{(k)})$ such that $Q > P$.
Invoking Definition \ref{def:lu},
we obtain
	\begin{align*}
	\bigcap_{X\in L^{U}(\mathcal{A})} H(X)&= P =\bigcap_{X\in L(\mathcal{A}),P\subseteq H(X)} H(X)
    \end{align*}
and
    \begin{align*}
	P\setminus\bigcup_{X\in L_U(\mathcal{A})} H(X)&= P^{\circ} =P\setminus\bigcup_{X\in L(\mathcal{A}),P\nsubseteq H(X)} H(X).
	\end{align*}
	It follows that 
\begin{align}
L^{U}(\mathcal{A}) = \{X \in L_k(\mathcal{A}) \mid P \subseteq H(X)\}
\end{align}	
and
\begin{align}\label{eq:def-pf}
 L_U(\mathcal{A}) = \{X \in L_k(\mathcal{A}) \mid P \nsubseteq H(X)\}.
\end{align}	
Thus for $U_1,U_2\in\mathcal{S}_P$, we have $L^{U_1}(\mathcal{A}) = L^{U_2}(\mathcal{A})$ and $L_{U_1}(\mathcal{A}) = L_{U_2}(\mathcal{A})$. Applying Lemma \ref{coro-2}, we obtain $\mathfrak{M}_\mathcal{A}(U_1)= \mathfrak{M}_\mathcal{A}(U_2)$. Let $\mathfrak{M}\in\mathbf{M}(\mathcal{A})$ denote the common matroid $\mathfrak{M}_\mathcal{A}(U)$ for all $U\in\mathcal{S}_P$. We see that $\mathcal{S}_P\subseteq\Omega_{\A}(\mathfrak{M})$.
	
Conversely, let $\Omega_{\A}(\mathfrak{M})$ be a stratum of the $\mathcal{A}$-matroid decomposition \eqref{TFG2}. 
For $U_1,U_2\in\Omega_{\A}(\mathfrak{M})$, that is, $\mathfrak{M}=\mathfrak{M}_{\A}(U_1)=\mathfrak{M}_{\A}(U_2)$. 
Lemma \ref{coro-2} implies that $L_{U_1}(\mathcal{A}) = L_{U_2}(\mathcal{A})$; consequently, $L^{U_1}(\mathcal{A}) = L^{U_2}(\mathcal{A})$. Let 
	\[
	P = \bigcap_{X \in L^{U_1}(\A)}H(X) =  \bigcap_{X \in L^{U_2}(\A)}H(X) \in L(\A^{(k)}).
	\]
	Then $\Delta(U_1)$ and $\Delta(U_2)$ both lie in $P^\circ$. 
	We obtain that all members of $\Omega_{\A}( \mathfrak{M})$ are in the same stratum $\mathcal{S}_P$ of the $\A$-adjoint decomposition \eqref{TFG1}, which implies $\Omega_{\A}( \mathfrak{M}) \subseteq \mathcal{S}_P$.
	
Next we show that the refined $\A$-Schubert decomposition \eqref{TFG3} coincides with the $\A$-matroid decomposition \eqref{TFG2}.
Assume that $\Omega_\A(\sigma)$ is a stratum of the decomposition \eqref{TFG3}, where $\sigma$ is a Schubert symbol of $\A$. 
Let $U \in \Omega_\A(\sigma)$. 
For each flag $\boldsymbol{F} \in\mathbf{mc}(L(\mathcal{A}))$,
we write
\(\boldsymbol{F}\colon \{0\} = F_0 \subsetneq F_1 \subsetneq \cdots \subsetneq F_n = \mathbb{R}^n \)
and
$\sigma(\boldsymbol{F}) = \{i_1(\boldsymbol{F}), \ldots, i_k(\boldsymbol{F})\}$.
Then we have 
\[
\dim \left(U\cap F_{i_j(\boldsymbol{F})}\right) > \dim \left(U\cap F_{i_{j}(\boldsymbol{F})-1}\right)
\]
for any $j \in \{1, 2,\ldots, k\}$ and $\dim (U\cap F_{i_{1}(\boldsymbol{F})-1}) = 0$.  
Therefore, for each $X \in L_k(\A)$, 
we have $U\oplus X=\mathbb{R}^n$ if and only if there exists some $\boldsymbol{F}\in\mathbf{mc}(L(\mathcal{A}))$ with $\sigma(\boldsymbol{F}) = \{n-k+1, n-k+2, \ldots, n\}$ such that $X = F_{n-k}$. 
Again, by Definition \ref{def:lu}, we have
\[
L_U(\A) = \{F_{n-k} \mid \boldsymbol{F}\in\mathbf{mc}(L(\mathcal{A}))\,\And\, \sigma(\boldsymbol{F}) = \{n-k+1, \ldots, n\}\}.
\]
Thus for $U_1,U_2\in \Omega_\A(\sigma)$, 
we have $L_{U_1}(\mathcal{A}) = L_{U_2}(\mathcal{A})$. 
Applying Lemma \ref{coro-2}, we obtain $\mathfrak{M}_{\A}(U_1) = \mathfrak{M}_{\A}(U_2)$. Let $\mathfrak{M}\in\mathbf{M}(\mathcal{A})$ denote the common matroid $\mathfrak{M}_\A(U)$ for all $U\in\Omega_\A(\sigma)$. We see that $\Omega_\A(\sigma)\subseteq\Omega_{\A}(\mathfrak{M})$.

Conversely, assume that $\Omega_{\A}(\mathfrak{M})$ is a stratum of the $\A$-matroid decomposition \eqref{TFG2}. Let $U \in \Omega_{\A}(\mathfrak{M})$. For each subset $I \subseteq [m]$, Lemma \ref{lem-2} implies that
\[
\dim\Big(U\cap\bigcap_{i\in I}H_i\Big) = k-\mathrm{rank}(I).
\]
Therefore, for each maximal chain \(\boldsymbol{F}\colon \{0\} = F_0 \subsetneq F_1 \subsetneq \cdots \subsetneq F_n = \mathbb{R}^n\), 
the dimension $\dim(U \cap F_i)$ is determined by the matroid $\mathfrak{M}$ for $i = 1, \ldots, n$. This implies that the unique Schubert symbol $\sigma$ of $\A$ such that $U \in \Omega_\A(\sigma)$ is determined by the matroid $\mathfrak{M}$. We obtain that all members of $\Omega_{\A}(\mathfrak{M})$ are in the same stratum $\Omega_{\A}(\sigma)$ of the decomposition \eqref{TFG3}, that is, $\Omega_{\A}(\mathfrak{M}) \subseteq \Omega_{\A}(\sigma)$. This completes the proof.
\end{proof}

\section{Combinatorial invariants}\label{Sec-4}

In this section, we will present a proof of Theorem \ref{wf}, which establishes the anti-monotonicity property of the independence numbers and Whitney numbers of the first kind as $U$ varies  across different $\A$-adjoint strata.

We place our problem within the framework of matroid theory.
Let $\mathfrak{M}$ be a matroid on $[m]$. 
Any subset of a basis is called an {\it independent set}, while a {\it dependent set} is a subset that is not independent. 
A  {\it circuit} is a minimal dependent set, and a  {\it flat} is a subset of $[m]$ whose rank increase when adding any other element. It is known that the collection of all flats of $\mathfrak{M}$, ordered by inclusion,
forms a lattice $L(\mathfrak{M})$ with a unique minimal element $\hat{0}$, i.e.,
the intersection of all flats.
An independent set of $\mathfrak{M}$ is called a {\it broken circuit}
if it is obtained from a circuit of $\mathfrak{M}$ by removing its maximal element under a given total order.
The {\em characteristic polynomial} $\chi_\mathfrak{M}(t)$ of $\mathfrak{M}$ is
\[\chi_\mathfrak{M}(t)=\sum_{x \in L(\mathfrak{M})} \mu(\hat{0}, x) t^{k-\rk(x)}
=\sum_{i=0}^{k}w_it^{k-i},\]
where $k=\rk(\mathfrak{M})$ and $\mu$ is the M\"obius function of $L(\mathfrak{M})$.
Each coefficient ${ w }_i(\mathfrak{M})$ is called the
{\em $i$-th Whitney number of the first kind}.
Whitney's celebrated NBC (no-broken-circuit) theorem \cite{Whitney1932} gives a combinatorial interpretation on the Whitney number of the first kind.
\begin{theorem}[NBC Theorem \cite{Stanley}]\label{NBCthm}
Let $\mathfrak{M}$ be a matroid on $[m]$.
Then
$|w_i(\mathfrak{M})|$ is the number of independent $i$-sets of $\mathfrak{M}$ containing no broken circuit for $i=0,1,\ldots,k$.
\end{theorem}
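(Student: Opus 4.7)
The plan is to prove the NBC theorem via Whitney's classical sign-reversing involution. Starting from the corank--nullity expansion
\[\chi_M(t) = \sum_{S \subseteq [m]} (-1)^{|S|}\, t^{k-\rk(S)},\]
which follows from the definition by Möbius inversion applied to the lattice $L(M)$ (using the identity $\sum_{S:\,\mathrm{cl}(S)=F}(-1)^{|S|} = \mu(\hat{0},F)$ above each flat), it suffices to construct a sign-reversing, rank-preserving involution on the subsets of $[m]$ that contain at least one broken circuit, so that their contributions cancel in pairs.

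To define the involution, fix the total order $1<2<\cdots<m$ that underlies the notion of broken circuit. For each $S\subseteq[m]$ containing a broken circuit, let $C(S)$ denote the circuit of $M$ minimizing $\max(C)$ among all circuits with $C\setminus\{\max(C)\}\subseteq S$, with lexicographic tie-breaking, and set $e(S)=\max(C(S))$ together with
\[\Psi(S) = S\,\triangle\,\{e(S)\}.\]
The crux of the argument is verifying $\Psi^{2}=\mathrm{id}$: any broken circuit lying in $S$ but not in $\Psi(S)$ (or vice versa) must contain $e(S)$, hence its own maximum exceeds $\max(C(S)\setminus\{e(S)\})$, so the ``smallest max'' rule applied to $\Psi(S)$ still recovers $C(S)$ and thus $e(S)$. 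Rank invariance uses the circuit property $e(S)\in\mathrm{cl}(C(S)\setminus\{e(S)\})\subseteq\mathrm{cl}(S\setminus\{e(S)\})$, giving $\rk(\Psi(S))=\rk(S\setminus\{e(S)\})=\rk(S)$, while $|\Psi(S)|=|S|\pm 1$ flips the parity.

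The subsets surviving the pairwise cancellation are exactly those containing no broken circuit, and these are automatically independent (any dependent set contains a circuit, hence a broken circuit). On each surviving $S$ we therefore have $\rk(S)=|S|$, so
\[\chi_M(t) \;=\; \sum_{i=0}^{k}(-1)^{i}\,|\mathrm{NBC}_{i}(M)|\,t^{k-i},\]
and matching coefficients with $\chi_M(t)=\sum_i w_i t^{k-i}$ yields $|w_i(M)|=|\mathrm{NBC}_{i}(M)|$, where $\mathrm{NBC}_i(M)$ is the set of independent $i$-subsets of $M$ containing no broken circuit. The main obstacle is to design the selection rule so that $\Psi$ is genuinely an involution: the tie-breaking must depend only on elements of $S$ strictly below $e(S)$, which is exactly what the ``minimum maximum, then lexicographic'' prescription guarantees, since any broken circuit affected by toggling $e(S)$ has its maximum strictly greater than $\max(C(S)\setminus\{e(S)\})$ and is therefore never selected.
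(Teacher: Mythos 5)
The paper does not prove this statement; it is quoted as Whitney's classical NBC theorem with a citation to Stanley's lecture notes, so there is no in-paper argument to compare against. Your proof is the standard sign-reversing-involution argument and it is correct. Two small points are worth making explicit. First, the corank--nullity expansion $\chi_M(t)=\sum_{S\subseteq[m]}(-1)^{|S|}t^{k-\rk(S)}$ via the identity $\sum_{S:\,\mathrm{cl}(S)=F}(-1)^{|S|}=\mu(\hat{0},F)$ requires $\mathrm{cl}(\emptyset)=\hat{0}$, i.e.\ that $M$ is loopless; this is a standing assumption in the paper, but it should be invoked. Second, in verifying $\Psi^{2}=\mathrm{id}$ you state that an affected broken circuit has maximum exceeding $\max\bigl(C(S)\setminus\{e(S)\}\bigr)$, which is weaker than what the stability of the selection rule needs; the correct (and easily available) bound is that any circuit $C'$ whose broken circuit's containment in $S$ changes under toggling $e(S)$ must have $e(S)\in C'\setminus\{\max C'\}$, hence $\max C' > e(S) = \max C(S)$ \emph{strictly}. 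This stronger inequality is what guarantees that neither the ``minimum maximum'' criterion nor the lexicographic tie-break (which only compares circuits with equal maximum, all of which are unaffected) can change, so $C(\Psi(S))=C(S)$. With that reading, the involution is fixed-point-free, parity-reversing, and rank-preserving by $e(S)\in\mathrm{cl}\bigl(C(S)\setminus\{e(S)\}\bigr)$, the survivors are exactly the independent NBC sets, and matching coefficients gives $|w_i(M)|=|\mathrm{NBC}_i(M)|$ as claimed.
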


Given a hyperplane arrangement $\mathcal{A}=\{H_1,H_2,\ldots,H_m\}$ in $\mathbb{R}^n$, the set of all normal vectors of hyperplanes in $\A$ defines a matroid $\mathfrak{M}(\mathcal{A})$ on $[m]$.
Note that $L(\mathfrak{M}(\mathcal{A}))$ is isomorphic to $L(\mathcal{A})$. 
Hence, the related concepts in the hyperplane arrangement $\A$, such as independence numbers and characteristic polynomial, are naturally inherited from $\mathfrak{M}(\mathcal{A})$.
Now we are prepared to prove Theorem \ref{wf}.

\begin{proof}[Proof of Theorem \ref{wf}]
We claim that $L_{U_2}(\mathcal{A}) \subseteq  L_{U_1}(\mathcal{A})$.
By the same argument in the proof of Theorem \ref{main}, we obtain
\begin{align}\label{cr-1}
L_{U_j}(\mathcal{A}) = \{ X \in L_k(\A) \mid P_j \not\subseteq H(X) \}
\end{align}
for \( j = 1, 2 \); see \eqref{eq:def-pf}.
For any $X\in L_{U_2}(\mathcal{A})$,
we have
$P_2\not\subseteq H(X)$
by \eqref{cr-1}.
It follows from $P_2 \subseteq P_1$ that
$P_1\not\subseteq H(X)$.
So we have $X\in L_{U_1}(\mathcal{A})$ as claimed.

To prove that $I_i(\mathcal{A}|_{U_1})\geq I_i(\mathcal{A}|_{U_2})$,
it suffices to show that each basis $J$ of $\mathfrak{M}_{\A}(U_2)$
is a basis of $\mathfrak{M}_{\A}(U_1)$.
It follows from Lemma \ref{coro-2} that $\bigcap_{j\in J} H_j \in L_{U_2}(\mathcal{A}) \subseteq L_{U_1}(\mathcal{A})$. Again by Lemma \ref{coro-2}, $J$ also serves as a basis for $\mathfrak{M}_{\A}(U_1)$, as desired. 

To prove that $|{w}_i(\mathcal{A}|_{U_1})|\geq |w_i(\mathcal{A}|_{U_2})|$, note that each dependent set of $\mathfrak{M}_{\A}(U_1)$ is dependent in $\mathfrak{M}_{\A}(U_2)$.
It follows that each broken circuit of $\mathfrak{M}_{\A}(U_1)$ has a subset which is a broken circuit of $\mathfrak{M}_{\A}(U_2)$. Therefore,
any subset of $[m]$ containing no broken circuit of $\mathfrak{M}_{\A}(U_2)$ does not contain broken circuit of $\mathfrak{M}_{\A}(U_1)$.
By Whitney's NBC (no-broken-circuit) Theorem \ref{NBCthm},
we have $|{w}_i(\mathcal{A}|_{U_1})|\geq |w_i(\mathcal{A}|_{U_2})|$.
\end{proof}

\noindent
\textbf{\large Acknowledgments.}
This work was done under the auspices of the National Science Foundation of China (12101613).
We sincerely thank the anonymous reviewers for their insightful comments and valuable suggestions, which have greatly improved this manuscript.

\end{document}